\documentclass[a4paper,twosided, 11pt]{article}
\usepackage[utf8]{inputenc} 
\usepackage[T1]{fontenc} 
\usepackage[a4paper]{geometry} 
\usepackage{amsmath, amssymb, mathtools, amsthm}
\usepackage{mathtools} 
\usepackage{mathrsfs}  
\usepackage{graphicx} 
\usepackage{xcolor}   
\usepackage[english]{babel} 
\usepackage{bbm}
\usepackage[title]{appendix}

\usepackage{caption}
\usepackage{subcaption}
\usepackage{comment}
\usepackage{geometry}
\usepackage{cite}
\usepackage{empheq}
\usepackage{float}
\usepackage{booktabs}
\usepackage{varwidth}
\usepackage{enumitem} 
\geometry{
	a4paper,
	total={170mm,257mm},
	left=25mm,
 right=25mm,
	top=25mm,
 bottom=25mm
}

\usepackage[colorlinks=true]{hyperref} 

\newcommand \commentout[1] {}


%
\newcommand{\R}{\mathbb{R}}

\newcommand{\Z}{\mathbb{Z}}

\newcommand {\Chi} {{\bf \raise 2pt \hbox{$\chi$}} }

\newcommand {\sgn} { {\rm sgn} }

\newcommand {\T}  { {\mathbb{T}} }

\newcommand {\p}   {\partial}

\newcommand*{\dd}{\mathop{\kern0pt\mathrm{d}}\!{}}

\newcommand*{\DD}{\mathop{\kern0pt\mathrm{D}}\!{}}

\newcommand*{\vv}{\mathbf{v}}
\newcommand*{\ww}{\mathbf{w}}

%


\theoremstyle{plain}
\newtheorem*{thm*}{Theorem}
\newtheorem{thm}{Theorem}[section]

\newtheorem{ass}[thm]{Assumption}
\newtheorem{proposition}[thm]{Proposition}

\theoremstyle{remark}
\newtheorem{remark}[thm]{\bf Remark}
\newtheorem{definition}[thm]{\bf Definition}


%

%
\newcommand{\beq}{\begin{equation}}
\newcommand{\eeq}{\end{equation}}
\newcommand{\bea} {\begin{array}{rl}}
\newcommand{\eea} {\end{array}}
\newcommand{\bepa}{\left\{ \begin{array}{l}}
\newcommand{\eepa} {\end{array}\right.}

\numberwithin{equation}{section}

\parindent 0pt 

\title{A cross-diffusion system with independent drifts and fast diffusion}

\author{
    Charles Elbar%
     \thanks{Universite Claude Bernard Lyon 1, ICJ UMR5208, CNRS, Ecole Centrale de Lyon, INSA Lyon, Université Jean Monnet, 69622
Villeurbanne, France. Email: elbar@math.univ-lyon1.fr} %
    \and 
    Filippo Santambrogio%
     \thanks{Universite Claude Bernard Lyon 1, ICJ UMR5208, CNRS, Ecole Centrale de Lyon, INSA Lyon, Université Jean Monnet, 69622
Villeurbanne, France. Email: santambrogio@math.univ-lyon1.fr}
}
\date{}

\begin{document}

\maketitle

\begin{abstract}
We study a one-dimensional cross-diffusion system for two interacting populations on the torus, with a fast-diffusion law with exponent $0< \alpha\le 1$  and different external potentials.  For arbitrary non-negative $L^{1}$ initial data with bounded entropy and a mixing condition we prove the existence of global weak solutions.  This extends the recent result of Mészáros, Parker from the linear diffusion ($\alpha=1$) to the fast-diffusion.   

\end{abstract}
\vskip .7cm

\noindent{\makebox[1in]\hrulefill}\newline
2020 \textit{Mathematics Subject Classification.} 35K55;  35K65; 35D30; 35Q92; 92D25.
\newline\textit{Keywords and phrases.} Cross-diffusion systems; Fast-diffusion pressure; Global weak solutions; Degenerate diffusion; Population dynamics. 

\section{Introduction}

Understanding how interacting species compete for space and resources is fundamental in many areas of biology, from cell migration and tumor growth to large-scale population dynamics. A common feature in these systems is that individuals adjust their movement to avoid overcrowded regions while also responding to heterogeneities in their environment.

In this paper we study a system that takes into account these dynamics for two interacting populations, whose densities are denoted by $\rho(t,x)$ and $\mu(t,x)$. We consider their evolution on a one-dimensional torus $\T := \R/\Z$ over a time interval $[0,T]$ for some $T>0$ (if the choice of the torus is motiviated by simplicity in order to avoid boundary issues, the choice of the one-dimensional setting is, on the other hand, crucial for our approach, as the reader will see). They are given by the following cross-diffusion system:

\begin{align}
\partial_t\rho - \partial_x\left(\rho\,\partial_x f'(\rho+\mu)\right)
              - \partial_x\left(\rho\,\partial_x V\right) &= 0,
              \label{eq:rho}\\
\partial_t\mu  - \partial_x\left(\mu\,\partial_x f'(\rho+\mu)\right)
              - \partial_x\left(\mu\,\partial_x W\right) &= 0,
              \label{eq:mu}
\end{align}

Here, $\rho$ and $\mu$ have initial densities $\rho(0,\cdot) = \rho_0$ and $\mu(0,\cdot) = \mu_0$ assumed to be non-negative. The functions $V,W\in C^3(\T)$ are external potentials, and the nonlinear function $f':\R\to\R$ is the pressure, assumed to be of fast-diffusion type. We choose to call it $f'$ since it is its antiderivative $f$
 which appears when expressing this system as a gradient flow. The main goal of this paper is to prove the existence of weak solutions to System~\eqref{eq:rho}--\eqref{eq:mu}.\\

\textbf{Biological relevancy.} From a biological point of view, System~\eqref{eq:rho}--\eqref{eq:mu} models several important mechanisms that we describe here. The term \(\partial_x\bigl(\rho\,\partial_x f'(\rho+\mu)\bigr)\) (and similarly for \(\mu\)) is a \emph{cross-diffusion} effect: the movement of each population is influenced by the gradient of a pressure that depends on the total density \(\rho+\mu\) as in~\cite{MR4072681,David03042023}. This models the fact that the two populations compete for space and tend to avoid overcrowded regions. Such cross-diffusion mechanisms have been used to describe many biological effects, such as the territorial segregation in ecological systems or the movement of cells in tissues.

In addition to this pressure-driven movement, the populations feel external potentials, $V(x)$ and $W(x)$, which model directed drift. They represent possible spatial heterogeneities in the environment, such as gradients of nutrients, chemotactic signals, or the presence of rivers, water columns etc.~\cite{MR1654407,MR1372792,MR2290095,MR1961241,MR2462700,MR3017035,MR2501474}. An important aspect of the model is that it allows these potentials to be different for each species ($V \neq W$), which allows the biological reality that different species or cells types may have different environmental preferences or may respond differently to chemotactic signals.

Here we choose a \emph{fast diffusion} type nonlinearity for \(f\), as our proof does not cover other regimes. More precisely, $f(s)$ is assumed to behave like \(s^\alpha\) for some \(0<\alpha\leq 1\), meaning that the diffusivity increases when the total density decreases. Indeed, we will see that the sum $S:=\rho+\mu$ of the two densities will be driven by a diffusion PDE with a diffusive term $\Delta(\Phi(S))$, where $\Phi'(s)=sf''(s)$; the function $\Phi$ is homogeneous to $f$ and it also behaves as $s^\alpha$, so that we face a fast-diffusion equation for $S$. This diffusivity is biologically relevant and it models the observation that, at low densities, populations tend to spread faster to occupy available space and to colonize low-density regions quickly. For example, in ecology, animal populations or plant seeds placed in an empty space often expand their range aggressively when the competition is low, to have access to resources before others arrive. In tissue dynamics, motile cells at the front and close to the wound move faster towards the wound~\cite{Nikolic2006}. \\

\textbf{Comparison with the litterature.} Cross‐diffusion systems have attracted a lot of interest, see for instance~\cite{MR3350617,MR4497322,MR1616969,MR2083864,MR2220063,2024arXiv241205751G,MR4664467,MR4629864} because they allow one species’ movement to depend on another’s density gradient. A similar system than~\eqref{eq:rho}--\eqref{eq:mu}, with reaction terms instead of different potentials, was studied in~\cite{MR736508,MR2652018,MR4664467}  and then with optimal transport methods in~\cite{MR3870087} in the one-dimensional case. Indeed the system is the gradient flow of the functional 

$$
F[\rho,\mu] = \int_{\T}f(\rho+\mu) + \rho V + \mu W
$$

with respect to the Wasserstein metric.  The very first existence result for weak solutions of our system appeared in~\cite{MR3795211}, under very restrictive assumptions the potentials and initial density, mainly that the density
with stronger drift in $x$ direction sits on the right side on the $x$-axis, which corresponds to segregated solutions. This limits both generality and biological applications.  A significant step came in~\cite{2025arXiv250418484M}, where the authors tackled the one‑dimensional case with \(\Phi(S)=S\) (\(\alpha=1\)). They proved existence for broad classes of initial data and interaction potentials.  Unfortunately, their proof is rigid: any small perturbation of the logarithm breaks the key identities and causes the argument to fail.

In this paper, we improve on this result.  We show that as long as we choose $\Phi(s)=s^\alpha$ for $0 < \alpha\le1,$
one can obtain weak solutions under the same general assumptions on data and potentials.  Our method does not depend on the exact form of $\Phi$ and $f$, and we would be able to cover in fact more general diffusion laws but for simplicity of the exposition we will stick to the power case.\\

\textbf{Notations and functional settings.} We denote by $L^{p}(\T)$, $H^{s}(\T)=W^{s,2}(\T)$ the usual Lebesgue and Sobolev spaces, and by $\|\cdot\|_{L^{p}}$, $\|\cdot\|_{H^s}$ their corresponding norms. We often write $C$ for a generic constant appearing in the different inequalities. Its value can change from one line to another, and its dependence to other constants can be specified by writing $C(a,...)$ if it depends on the parameter $a$ and other parameters.

\subsection{Main result}

We state our main result here, that is the existence of weak solutions of System~\eqref{eq:rho}--\eqref{eq:mu}.  We require $f$ to satisfy a fast--diffusion type assumption. 

\begin{ass}\label{ass:nonlinearity}
We assume that $\Phi(s)=s^{\alpha}$ for some $0< \alpha\le 1 $, where $\Phi$ is defined via $\Phi'(s)=sf''(s)$.
\end{ass}

Then we define the notion of weak solutions:

\begin{definition}[Weak solution]\label{def}
A pair $(\rho,\mu)$ is a \emph{weak solution} of
\eqref{eq:rho}--\eqref{eq:mu} on $(0,T)\times\T$ if
\begin{enumerate}
\item $\rho,\mu\ge0$ and
      $\rho,\mu\in L^\infty((0,T);L^1(\T))$;
\item $\sqrt{\rho}\partial_{x}f'(\rho+\mu),\,  \sqrt{\mu}\partial_x f'(\rho+\mu)\in L^{2}((0,T); L^{2}(\T))$;
\item for every $\varphi,\psi\in C_c^\infty([0,T)\times\T)$,
      \begin{align*}
        &\int_{0}^{T}\int_{\T}
        \left[-\rho\,\partial_t\varphi
              +\rho\,\partial_x f'(\rho+\mu)\,\partial_x\varphi
              +\rho\,\partial_xV\,\partial_x\varphi\right]dx\,dt
          =\int_{\T}\rho_0(x)\varphi(0,x)\,dx,\\
        &\int_{0}^{T}\int_{\T}
        \left[-\mu\,\partial_t\psi
              +\mu\,\partial_x f'(\rho+\mu)\,\partial_x\psi
              +\mu\,\partial_xW\,\partial_x\psi\right]dx\,dt
          =\int_{\T}\mu_0(x)\psi(0,x)\,dx.
      \end{align*}
\end{enumerate}
\end{definition}

Our main result reads: 

\begin{thm}[Global existence]\label{thm:main}
Suppose Assumption~\ref{ass:nonlinearity} holds and take $V,W\in C^3(\T)$, $\rho_{0},\mu_{0}\in L^{1}(\T)$ two non-negative densities such that $\rho_0\log\rho_0, \mu_0\log\mu_0\in L^{1}(\T^d)$ and $\log(\rho_0/\mu_0)\in BV(\T)$.
Then there exists at least one weak solution
$(\rho,\mu)$ of \eqref{eq:rho}--\eqref{eq:mu} on $[0,T]\times\T$
in the sense of Definition~\ref{def}. 
\end{thm}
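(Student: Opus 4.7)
My plan is to follow the classical \emph{approximate--estimate--pass to the limit} scheme. First I would build smooth, strictly positive approximate solutions $(\rho_\e,\mu_\e)$ to a regularized version of \eqref{eq:rho}--\eqref{eq:mu}, e.g.\ via a joint JKO scheme in the product Wasserstein space for the perturbed energy $F[\rho,\mu]+\e\int_\T(\rho\log\rho+\mu\log\mu)$: the extra Boltzmann entropy adds a small linear diffusion which keeps densities bounded away from zero and justifies all formal manipulations below. A viscous regularization $-\e\partial_{xx}$ of each PDE together with mollified initial data is an equivalent alternative. All the subsequent bounds must be uniform in $\e$, and the limit $\e\to 0$ recovers a weak solution in the sense of Definition~\ref{def}.

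\textbf{A priori estimates.} Four estimates drive the argument. (i) The gradient-flow structure yields
$$\frac{d}{dt}F[\rho,\mu]=-\int_\T\rho\bigl(\partial_x f'(S)+V'\bigr)^{2}-\int_\T\mu\bigl(\partial_x f'(S)+W'\bigr)^{2},\qquad S:=\rho+\mu,$$
hence uniform $L^{2}((0,T)\times\T)$ bounds on $\sqrt{\rho}\,\partial_x f'(S)$, $\sqrt{\mu}\,\partial_x f'(S)$, and thus on $\partial_x\Phi(S)=S\,\partial_x f'(S)$. (ii) The Boltzmann entropy $\int\rho\log\rho+\mu\log\mu$ stays bounded. (iii) Summing the two equations, $S$ satisfies $\partial_t S-\partial_{xx}\Phi(S)=\partial_x(\rho V'+\mu W')$, so $\partial_t S\in L^{2}(0,T;H^{-1}(\T))$ uniformly and Aubin--Lions will apply to $\Phi(S_\e)$. (iv) The decisive new bound is a uniform BV control on $R:=\log(\rho/\mu)$. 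A direct computation gives
$$\partial_t R-\bigl(\partial_x f'(S)+V'\bigr)\partial_x R=(V'-W')\,\partial_x\log\mu+V''-W'',$$
a transport equation with a source involving $V-W$ and an uncontrolled gradient of $\log\mu$. I would differentiate it in $x$, multiply by $\sgn(\partial_x R)$ and integrate in space and time to close a Gronwall-type estimate on $\|\partial_x R_\e\|_{L^{1}(\T)}$, using (i)--(iii) together with algebraic identities specific to $\Phi(s)=s^{\al}$ to absorb the singular contribution of $\partial_x\log\mu$.

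\textbf{Compactness and passage to the limit.} By (i) and (iii), Aubin--Lions gives strong $L^{2}$ convergence of $\Phi(S_\e)$, and monotonicity of $\Phi$ promotes this to a.e.\ convergence of $S_\e$, upgradable to $L^{p}$ convergence via (ii). The uniform BV bound (iv), combined with a weak time regularity for $R_\e$ read off the ratio equation, yields strong $L^{1}$ convergence of $R_\e$. Since $\rho=S/(1+e^{-R})$ and $\mu=S/(1+e^{R})$, this produces separate strong convergence of $\rho_\e$ and $\mu_\e$. For the nonlinear flux I would rewrite $\rho_\e\,\partial_x f'(S_\e)=(\rho_\e/S_\e)\,\partial_x\Phi(S_\e)$: the bounded ratio $\rho_\e/S_\e\in[0,1]$ converges strongly, while $\partial_x\Phi(S_\e)$ converges weakly in $L^{2}$ by (i), and a strong$\times$weak argument closes the passage to the limit in the weak formulation; the drift terms are straightforward.

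\textbf{Main obstacle.} The whole construction rests on estimate (iv). For $\alpha=1$ one has $f'(s)=\log s$ and the ratio equation enjoys the very special cancellations exploited in~\cite{2025arXiv250418484M}. For $0<\al<1$ the drift contains $\partial_x f'(S)=\Phi'(S)\partial_x S/S\sim S^{\al-2}\partial_x S$, which degenerates as $S\to 0$, and the source $(V'-W')\partial_x\log\mu$ carries an a priori uncontrolled derivative. Closing an $\e$-uniform BV estimate in this regime requires exploiting fine algebraic identities satisfied by $\Phi(s)=s^{\al}$ (of the kind encoded in the auxiliary functions $y,z$ alluded to by the suppressed earlier version of Assumption~\ref{ass:nonlinearity}) together with the one-dimensional setting, where $BV$ compactness plays the role that higher Sobolev regularity would play in higher dimensions. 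This is where the bulk of the technical work is to be expected.
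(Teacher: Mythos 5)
The decisive step of your plan, item (iv), is exactly where the proposal stops short, and as written it would fail there. If you differentiate your ratio equation in $x$ and multiply by $\sgn(\partial_x R)$, the transport part is harmless, but the source produces second-order terms — $(V'-W')\partial_{xx}\log\mu$ in your form, or $2\ww\,\partial_{xx}\log S$ in the symmetrized form used in the paper — which are not controlled in $L^1$ and do not cancel upon taking the sign; for $\alpha=1$ this is rescued by replacing $r$ with $r+V-W$, but for $\alpha<1$ no constant shift works. The paper's key new idea is a density-dependent corrector: set $u:=\partial_x r-2\ww\,y(S)$ with $y$ solving $y(s)\Phi'(s)/s-y'(s)\Phi'(s)+1/s=0$, i.e.\ $y(s)=-s^{1-\alpha}/\alpha^{2}$; inserting $\partial_t S=\Phi'(S)\partial_{xx}S+\dots$ into $\partial_t u$, every $\partial_{xx}S$ term cancels, and the surviving lower-order terms ($y(S)=CS^{1-\alpha}$, $|\partial_x\log S|^{2}$, $y(S)^{2}$, etc.) are precisely the quantities that the fast-diffusion estimates of Proposition~\ref{prop:spatial_S} make integrable in time, so that Gronwall closes for $\int_\T|u|$ and hence for $\int_\T|\partial_x r|$. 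Announcing that "algebraic identities specific to $\Phi(s)=s^{\alpha}$" will absorb the singular contribution, without exhibiting this corrector (or an equivalent device), leaves the central estimate of the theorem unproven.

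Two further steps are asserted but not available as stated. First, the energy dissipation gives $\sqrt{S}\,\partial_x f'(S)=C\,\partial_x S^{\alpha-\frac12}\in L^{2}$, not $\partial_x\Phi(S)=S\,\partial_x f'(S)\in L^{2}$: the missing factor $\sqrt{S}$ cannot be bounded in $L^{\infty}$ (no comparison principle is available and $S$ is only known to lie in $L^{2-\alpha}$), so your uniform $L^{2}(H^{-1})$ bound on $\partial_t S$, the Aubin--Lions argument applied to $\Phi(S_\e)$, and the weak $L^{2}$ convergence of $\partial_x\Phi(S_\e)$ in your strong$\times$weak limit are all unjustified; the paper instead writes $\rho_n\partial_x f'(S_n)=\bigl(\rho_n/\sqrt{S_n}\bigr)\,C\,\partial_x S_n^{\alpha-\frac12}$, handles the first factor by Vitali, and puts the time compactness for $\rho,\mu$ in $L^{2}((0,T);H^{-s})$ directly from the equations. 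Second, even once the BV bound on $r$ is known, the induced bound $\partial_x\rho,\partial_x\mu\in L^{\e}((0,T);L^{1}(\T))$ holds only for some $\e<1$ (one needs $S\in L^{\e}_t L^{\infty}_x$ to multiply $S$ against $\partial_x r\in L^{\infty}_t L^{1}_x$), so the classical Aubin--Lions lemma does not apply and a modified compactness lemma is required, splitting time into good and bad slices and invoking Riesz--Fréchet--Kolmogorov, as in Proposition~\ref{prop:Aubin-Lions}. Your overall architecture (approximation, entropy/energy estimates, one-dimensional BV, strong$\times$weak passage to the limit) matches the paper's, but these points — above all the corrector — constitute the actual content of the proof.
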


\begin{remark}
\begin{itemize}
\item The method we use to prove Theorem~\ref{thm:main} covers also more general diffusions laws than Assumption~\ref{ass:nonlinearity}. In fact, we mainly use the integrability of some functionals involving $\Phi$. For instance both taking $\Phi(s)= s^2 + s^{-1}$ work, but we are not able to cover the case $\Phi(s) = s^2$.

\item In principle, the method could be adapted to most of nonlinearities, including the porous medium $\Phi(s)=t^2$, as long as one can prove lower and upper bounds on the total density $S=\rho+\mu$, up to considering also suitable initial data. However, we were not able to derive such estimates, as it does not seem that comparison principle holds.
\end{itemize}
\end{remark}

Also, to simplify the exposition, we only prove Theorem~\ref{thm:main} assuming we deal with smooth solutions and we obtain a priori estimates. All the computations can be justified in an approximated scheme, exactly as in~\cite{2025arXiv250418484M}.

\subsection{Sketch of proof}

We outline here the core of the argument.  Let us assume that we have an approximating scheme $\{\rho_n\}_n$, $\{\mu_n\}_n$, for instance adding an artificial viscotity. The difficulty in obtaining weak solution of the system is to pass to the limit in the nonlinear fluxes.

Consider \eqref{eq:rho}.  The critical product is
$$
\rho_n\,\partial_x f'(\rho_n+\mu_n),
$$
because both factors are known a priori to converge only weakly. Indeed:

\begin{itemize}
\item    Since both equations preserve the mass, we easily get 
      $L^1$‐bounds on $S_n:=\rho_n+\mu_n$.  Therefore
      $\rho_n,\mu_n\in L^\infty((0,T);L^1(\T))$, hence $\sqrt{\rho_n}$ and $\sqrt{\mu_n}$  converge
      weakly (up to a subsequence)  in $L^2$.
\item  Looking at the dissipation of the energy $F$ we easily obtain
      $\sqrt{\rho_n}\partial_x f'(\rho_n+\mu_n), \sqrt{\mu_n}\partial_x f'(\rho_n+\mu_n)\in L^2\bigl((0,T)\times\T\bigr)$, so this factor
      also converges weakly in $L^2$.
\end{itemize}

To identify the limit of their product we therefore need to prove strong convergence
 of $\rho_n$ and $\mu_n$. In the fast-diffusion case, we are able to obtain an $H^1$ bound on a quantity involving the sum $S$. This alone is not enough and we need a second gradient estimate involving another independent scalar of $(\rho_n,\mu_n)$.

A common approach in such cross-diffusion systems is to consider
$r_n=g(\rho_n/\mu_n)$.  For $\Phi(s)=s$ (or equivalently $f'(s)=\log(s))$ the authors of \cite{2025arXiv250418484M} introduce
$$
r_n:=\log\Bigl(\frac{\rho_n}{\mu_n}\Bigr)+\bigl(V-W\bigr),
$$
and obtain the transport equation
$$
\partial_t r_n  =  (\partial_x r_n)v_n  + \text{l.o.t.},
$$
where $v_n$ is a velocity field (with no special regularity required) and l.o.t. are terms
with integrable spatial derivatives. It is well-known that in one space dimension transport equations preserve the BV norm. Here, taking $\partial_x$, then
multiplying by $\operatorname{sign}(\partial_x r_n)$ and integrating over $\T$
gives the $\mathrm{BV}$‐estimate
$$
\int_\T\lvert\partial_x r_n(t,x)\rvert\,dx  \le  C\quad
\forall\,t\in[0,T].
$$
Coupled with the gradient bound on $S_n$, this is enough to deduce strong
convergence of $\rho_n$ and $\mu_n$. In our settings, we modify the idea and rather than seeking for a transport equation satisfied by a ratio variable we work directly
with space derivatives. Set
$$
r_n=\log\left(\frac{\rho_n}{\mu_n}\right),\quad
u_n=\partial_x r_n-(\partial_x V(x) - \partial_x W(x))\, y(S_n),
$$
where
$y$ is choosen carefully in terms of $\Phi$.  We obtain that $u_n$ remains bounded in $L^1$.  Controlling $\partial_x r_n$ reduces to bounding
$\|y(S_n)\|_{L^1}$. In the fast diffusion case, we obtain $y(s)=Cs^{1-\alpha}$ and hence $y(S_n)$ is integrable. Then we combine the $H^1$-bound on $S_n$, and the $\mathrm{BV}$‐bound on $r_n$ to obtain $\mathrm{BV}$ bounds on $\rho_n$ and $\mu_n$. Some difficulties arise because these $\mathrm{BV}$ bounds are not integrable in time, which does not occur when $\alpha=1$. But adapating the proof of the Aubin–Lions lemma yields strong convergence of $\rho_n$ and $\mu_n$ in $L^1$ towards some $\rho,\mu$.
With this we are able to show that the product $\rho_n\,\partial_x f'(\rho_n+\mu_n)$ converges to the correct limit, and the pair $(\rho,\mu)$ is a weak solution of \eqref{eq:rho}--\eqref{eq:mu}. All the subsequent a priori estimates are independent of $n$ and therefore we drop the index for clarity.

\section{Estimates on $S$}

We consier in this section the behavior of the sum $S$ of the two densities, defined as $S:=\rho+\mu$.
We begin by adding \eqref{eq:rho} and \eqref{eq:mu}.  We obtain
$$
\partial_t S  = \partial_t(\rho+\mu)
 = \partial_x \left(\rho\partial_x f'(\rho+\mu) \right)
 + \partial_x \left(\mu\partial_x f'(\rho+\mu) \right)
 + \partial_x \left(\rho\partial_x V \right)
 + \partial_x \left(\mu\partial_x W \right).
$$
Since $\rho\partial_x f'(S)+\mu\partial_x f'(S)  = \partial_x \Phi(S)$, we get
\begin{equation}\label{eq:prelim_dS}
\partial_t S 
 =  
\partial_{xx} \Phi(S)
 + \partial_x \left(\rho\partial_x V + \mu\partial_x W \right).
\end{equation}

This expression yield estimates on $S$.

\begin{proposition}[Spatial estimate on $S$]\label{prop:spatial_S}
We have the following estimates on $S$:
\begin{align*}
&S\in L^{\infty}((0,T); L^{1}(\T)), \quad \partial_x S^{\alpha-\frac{1}{2}}\in L^{2}((0,T)\times \T),\quad S^{\alpha-\frac{1}{2}}\in L^{2}((0,T); L^{\infty}(\T)),\\
&\partial_x \log S\in L^{2}((0,T)\times \T), \quad \partial_x S^{\frac{\alpha}{2}}\in L^{2}((0,T)\times \T) \quad S^{\frac{\alpha}{2}} \in L^{2}((0,T); L^{\infty}(\T)),\\
&S\in L^{2-\alpha}((0,T)\times \T),  \quad \partial_x S^{1-\alpha}\in L^{1}((0,T)\times \T), \quad  S^{1-\alpha}\in L^{1}((0,T); L^{\infty}(\T)). 
\end{align*}
\end{proposition}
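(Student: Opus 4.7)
The plan is to derive each estimate from a different multiplicative test on the scalar equation~\eqref{eq:prelim_dS}, combined with the one-dimensional Sobolev embeddings $W^{1,2}(\T)\hookrightarrow L^\infty(\T)$ and $W^{1,1}(\T)\hookrightarrow L^\infty(\T)$. Mass conservation $S\in L^\infty((0,T);L^1(\T))$ is immediate from integrating~\eqref{eq:prelim_dS} over $\T$. The gradient bound $\partial_x S^{\alpha-1/2}\in L^2$ I would derive from the Wasserstein gradient-flow structure: the dissipation identity
\begin{equation*}
\frac{d}{dt}F[\rho,\mu]=-\int_\T \rho\,|\partial_x(f'(S)+V)|^2\,dx-\int_\T\mu\,|\partial_x(f'(S)+W)|^2\,dx,
\end{equation*}
together with $(a+b)^2\ge\tfrac12 a^2-b^2$ and $\int_\T(\rho(\partial_x V)^2+\mu(\partial_x W)^2)\le CM$, yields $\int_0^T\!\int_\T S|\partial_x f'(S)|^2\,dx\,dt\le C$ after noting that $F$ is bounded (using $|f(s)|\le C(1+s)$); the pointwise identity $S|\partial_x f'(S)|^2=\tfrac{\alpha^2}{(\alpha-1/2)^2}|\partial_x S^{\alpha-1/2}|^2$ closes this step. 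The $L^2_t L^\infty_x$ bound on $S^{\alpha-1/2}$ then follows from $\|u\|_{L^\infty(\T)}\le\|u\|_{L^1(\T)}+C\|\partial_x u\|_{L^2(\T)}$, controlling $\|S^{\alpha-1/2}\|_{L^1(\T)}$ via Jensen when $\alpha\ge 1/2$ and via the elementary observation $\min_x S^{\alpha-1/2}\le M^{\alpha-1/2}$ when $\alpha<1/2$.

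The second row rests on the joint Boltzmann entropy. Testing~\eqref{eq:rho} by $\log\rho+1$ and~\eqref{eq:mu} by $\log\mu+1$, integrating by parts and summing yields the clean dissipation
\begin{equation*}
\frac{d}{dt}\int_\T(\rho\log\rho+\mu\log\mu)\,dx+\frac{4}{\alpha}\int_\T|\partial_x S^{\alpha/2}|^2\,dx=\int_\T(\rho\,\partial_{xx}V+\mu\,\partial_{xx}W)\,dx\le CM.
\end{equation*}
Combined with the finite initial entropy and $x\log x\ge-e^{-1}$, this gives $\partial_x S^{\alpha/2}\in L^2_{t,x}$; then $S^{\alpha/2}\in L^2_tL^\infty_x$ by 1D Sobolev, using $\|S^{\alpha/2}\|_{L^2(\T)}^2=\int S^\alpha\le M^\alpha$ (Jensen). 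For $\partial_x\log S\in L^2$ I would multiply~\eqref{eq:prelim_dS} by $G'(S)=-S^{-\alpha}/\alpha^2$, whose second derivative is chosen so that $G''(S)\Phi'(S)=S^{-2}$, making the diffusion contribution exactly $\int|\partial_x\log S|^2$; the antiderivative $G(S)=-S^{1-\alpha}/(\alpha^2(1-\alpha))$ is bounded in $L^1(\T)$ uniformly in time by Jensen. Bounding the drift via $|\rho\,\partial_x V+\mu\,\partial_x W|\le CS$, Cauchy--Schwarz and Young lead to
\begin{equation*}
\frac{d}{dt}\int_\T G(S)\,dx+\tfrac12\int_\T|\partial_x\log S|^2\,dx\le \tfrac{C}{\alpha^2}\int_\T S^{2-2\alpha}\,dx,
\end{equation*}
and the remaining $\int S^{2-2\alpha}$ is handled by H\"older ($\le M^{2-2\alpha}$) when $\alpha\ge 1/2$, and by a 1D Gagliardo--Nirenberg interpolation of $\|S^{\alpha/2}\|_{L^\infty}$ (using $\partial_x S^{\alpha/2}\in L^2_{t,x}$) together with $\int S\le M$ when $\alpha<1/2$.

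The third row then follows by identity and interpolation. Writing $\partial_x S^{1-\alpha}=(1-\alpha)S^{1-\alpha}\partial_x\log S$ and applying Cauchy--Schwarz in space-time gives $\|\partial_x S^{1-\alpha}\|_{L^1_{t,x}}\le(1-\alpha)\|S^{1-\alpha}\|_{L^2_{t,x}}\|\partial_x\log S\|_{L^2_{t,x}}\le C$; then $W^{1,1}(\T)\hookrightarrow L^\infty(\T)$ and Jensen combine to $\|S^{1-\alpha}\|_{L^\infty_x}\le M^{1-\alpha}+\|\partial_x S^{1-\alpha}\|_{L^1_x}$, hence $S^{1-\alpha}\in L^1_tL^\infty_x$; and finally $\int_0^T\!\int_\T S^{2-\alpha}\le M\|S^{1-\alpha}\|_{L^1_tL^\infty_x}\le C$ gives $S\in L^{2-\alpha}((0,T)\times\T)$. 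I expect the delicate point to be closing the $\partial_x\log S$ estimate: the Young step produces $\int S^{2-2\alpha}$, which would most naturally be bounded through $\partial_x S^{1-\alpha}\in L^1$, creating a circular dependency with the third row. Breaking this loop requires the direct H\"older/Gagliardo--Nirenberg route described above, potentially with a case split in $\alpha$ (the range $\alpha\downarrow 0$ being the most subtle).
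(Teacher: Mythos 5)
Your proposal is correct, but it organizes the interlocking estimates differently from the paper. For the bound $\partial_x S^{\alpha-1/2}\in L^2$ you use the dissipation of the Wasserstein energy $F$ (which the paper only invokes in its sketch, to get item 2 of Definition~\ref{def}), whereas the paper's proof of Proposition~\ref{prop:spatial_S} runs the whole first two rows through the one-parameter family of Lyapunov functionals $\int_\T -S^\beta$ with $\beta=\alpha$ and $\beta=1-\alpha$, plus the Boltzmann entropy; both derivations are valid and give the same quantities. The genuine difference is how the potential circularity you correctly identified is broken. The paper first proves the third row \emph{independently} of $\partial_x\log S$, via the self-improving chain $\iint S^{2-\alpha}\lesssim \int_t\|S^{1-\alpha}\|_{L^\infty}\lesssim\iint|\partial_xS^{1-\alpha}|\lesssim(\iint S^{2-3\alpha})^{1/2}\|\partial_xS^{\alpha/2}\|_{L^2_{t,x}}\lesssim(\iint S^{2-\alpha})^{\kappa}$ with $\kappa=\tfrac12\tfrac{2-3\alpha}{2-\alpha}<1$, and only then feeds $S\in L^{2-\alpha}\subset L^{2-2\alpha}$ into the $\beta=1-\alpha$ dissipation to get $\partial_x\log S\in L^2$. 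You instead close $\partial_x\log S$ first by controlling $\iint S^{2-2\alpha}$ directly (Jensen for $\alpha\ge 1/2$; Gagliardo--Nirenberg from $\partial_xS^{\alpha/2}\in L^2_{t,x}$ and mass for $\alpha<1/2$) and then deduce the third row from $\partial_xS^{1-\alpha}=(1-\alpha)S^{1-\alpha}\partial_x\log S$ by Cauchy--Schwarz and $W^{1,1}(\T)\hookrightarrow L^\infty(\T)$. Your route works for the full range: the GN step gives $\int_\T S^{2-2\alpha}\le M\|S^{\alpha/2}\|_{L^\infty}^{2(1-2\alpha)/\alpha}\lesssim 1+\|\partial_xS^{\alpha/2}\|_{L^2(\T)}^{2(1-2\alpha)/(1+\alpha)}$, and since $2(1-2\alpha)/(1+\alpha)<2$ this is time-integrable by H\"older; you should carry out this exponent computation explicitly, since the naive bound $\int S^{2-2\alpha}\le M\|S\|_{L^\infty}^{1-2\alpha}$ with $\|S\|_{L^\infty}^{\alpha}\in L^1_t$ alone would only cover $\alpha\ge 1/3$. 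What each approach buys: the paper's bootstrap needs no case split in $\alpha$ and yields the stronger $S\in L^{2-\alpha}$ before any use of $\partial_x\log S$, while yours is more elementary (standard interpolation) at the price of the exponent bookkeeping. Finally, note that both your argument and the paper's degenerate at $\alpha=1$ for the $\partial_x\log S$ bound (your $G$ becomes $-\log S$, requiring $\log S_0\in L^1$), a borderline case the paper also glosses over.
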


\begin{proof}
We first recall that the nonnegativity of both $\rho$ and $\mu$ can be obtained from the transport equation structure and the nonnegativity of the initial conditions. The first estimate is obtained after integrating~\eqref{eq:prelim_dS} in space and using periodic boundary conditions as well as the integrability of the initial conditions. 
We recall that $\Phi(S)= S^{\alpha}$ for $0< \alpha\le 1$. We only do the proof of the second estimate for $\alpha<1$, the proof for $\alpha=1$ being similar by considering the dissipation of $\frac{d}{dt}\int_{\T}S\log S$.
 Using~\eqref{eq:prelim_dS}, $\rho,\mu\le S$,  $|\partial_x V|, |\partial_x W|\le C$ and Young's inequality we obtain for $0\le \beta\le  1$

\begin{multline*}
\frac{d}{dt}\int_{\T}-S^{\beta}  + \alpha\beta(1-\beta)\int_{\T}S^{-3+\alpha+\beta}(\partial_x S)^2 \\= \beta(\beta-1)\int_{\T}\rho\partial_x V S^{\beta-2}\partial_xS+ \beta(\beta-1)\int_{\T}\mu\partial_x W S^{\beta-2}\partial_xS\\
\le \frac{\alpha\beta(1-\beta)}{2}\int_{\T}S^{-3+\alpha+\beta}(\partial_x S)^2 + C \int_{\T}S^{\beta+1-\alpha}.
\end{multline*}

Therefore we have

\begin{equation*}
\frac{d}{dt}\int_{\T}-S^{\beta} + \frac{2\alpha\beta(1-\beta)}{(\alpha+\beta-1)^2}\int_{\T}\left(\partial_x S^{\frac{\alpha+\beta-1}{2}}\right)^2 \le  C \int_{\T}S^{\beta+1-\alpha}.
\end{equation*}
Note that in this expression, the term in the middle reads $\frac{\alpha\beta(1-\beta)}{2}\int_\T \left(\partial_x \log S\right)^2$ when $\alpha+\beta=1$.

As $S\in L^{\infty}((0,T); L^{1}(\T))$, to obtain a priori estimates from this dissipation we first choose $\beta=\alpha$  and we obtain the second estimate after integrating in time and using also the integrability of the initial condition. The third estimate is a consequence of the first two estimates. Choosing then $\beta=1-\alpha$ and using the seventh estimate that is proved independently below we obtain the fourth estimate (indeed, for the validity of the fourth estimate we would need $S\in L^{2-2\alpha}$ and the seventh provides $S\in L^{2-\alpha}$; of course, when $\alpha>1/2$ this argument is not necessary).
To obtain the fifth estimate we compute the sum of the dissipation of each entropies: 
\begin{equation*}
\frac{d}{dt}\int_{\T}\rho\log\rho +\mu\log\mu + \int_{\T}f''(S)|\partial_xS|^2 = \int_{\T} \rho \partial_{xx}V +\int_{\T}\mu\partial_{xx}W\le C.   
\end{equation*}
The last estimate of this inequality is a consequence of mass conservation and the assumptions on $V,W$. Since $f''(S)|\partial_x S|^2 = C|\partial_x S^{\frac{\alpha}{2}}|^2$ we obtain the fifth estimate. We deduce the sixth estimate from the fifth.
From the eighth estimate we obtain the ninth, so it only remains to prove the seventh and eighth estimates. By using the previous estimates and Jensen's inequality we obtain:

\begin{align*}
\int_{0}^{T}\int_{\T}S^{2-\alpha}&= \int_{0}^{T}\int_{\T}SS^{1-\alpha}
\lesssim  \int_{0}^{T}\|S^{1-\alpha}\|_{L^{\infty}} \lesssim \int_{0}^{T}\int_{\T}\left|\partial_x S^{1-\alpha}\right|\lesssim \int_{0}^{T}\int_{\T}S^{1-\frac{3\alpha}{2}}|\partial_x S^{\frac{\alpha}{2}}|\\
&\lesssim \sqrt{\int_{0}^{T}\int_{\T}S^{2-3\alpha}}\sqrt{\int_{0}^{T}\int_{\T}|\partial_x S^{\frac{\alpha}{2}}|^2} \lesssim \left(\int_{0}^{T}\int_{\T}S^{2-\alpha}\right)^{\frac{1}{2}\frac{2-3\alpha}{2-\alpha}}
\end{align*}
From these inequalities we obtain the last two estimates: the seventh because $\frac{1}{2}\frac{2-3\alpha}{2-\alpha}<1$ and then the eighth one using the seventh estimate. 
\end{proof}

\begin{remark}
Eventhough such an estimate is not necessary in our analysis, we underline that it would be possible to obtain $S\in L^{1+\alpha}((0,T)\times \T)$ by computing the dissipation of the $H^{-1}(\T)$ norm of $S$.
\end{remark}

\section{The equations on $S$ and $r$}
We recall that we use the notations
$$
S(t,x) := \rho(t,x) + \mu(t,x), 
\qquad
r(t,x) := \log \left(\rho(t,x) \right) - \log \left(\mu(t,x) \right).
$$
We have
\begin{equation}\label{eq:rho_mu_in_terms_of_S_r}
\rho  = \frac{Se^r}{e^r+1},
\qquad
\mu  = \frac{S}{e^r+1}.
\end{equation}
Observe also that we have
$$
\rho-\mu = Sh(r), 
$$
where we define
\begin{equation}\label{eq:def_h}
h(r) := \frac{e^r-1}{e^r+1}. 
\end{equation}
with $h'(r)  
 = \frac{2e^r}{(e^r+1)^2}$. We also introduce
\begin{equation}\label{eq:def_g}
g(r) := \log \left(\frac{e^r}{(e^r+1)^2} \right) 
\end{equation}
so that $g'(r)  = \frac{1-e^r}{1+e^r}$.

Next we rewrite the term $\rho\partial_x V + \mu\partial_x W$ in a convenient form.  Set
\begin{equation}\label{eq:def_alpha_beta}
\vv  := \frac{\partial_x V+ \partial_x W}{2},
\qquad
\ww  := \frac{\partial_x V- \partial_x W}{2}.
\end{equation}
Then we obtain
$$
\rho\partial_x V + \mu\partial_x W
 =   \left(\rho+\mu \right) \vv
 +  \left(\rho-\mu \right) \ww. 
$$
Hence, using \eqref{eq:rho_mu_in_terms_of_S_r} and \eqref{eq:def_h},
\begin{equation}\label{eq:rhoV_plus_muW}
\rho\partial_x V + \mu\partial_x W
 =  S\vv  +  Sh(r)\ww.
\end{equation}
Plugging \eqref{eq:rhoV_plus_muW} into \eqref{eq:prelim_dS} gives
$$
\partial_t S 
 =  \partial_{xx} \Phi(S)
 + \partial_x \left(S\vv + Sh(r)\ww \right).
$$
We now expand the last term carefully:

$$
\partial_x \left(S\vv + Sh(r)\ww \right)
= \partial_x S\vv  + S\partial_x\vv
 + \partial_x S h(r)\ww + Sh'(r)\partial_x r\ww
 + Sh(r)\partial_x\ww.
$$
Hence
\begin{equation}\label{eq:dS_final}
\partial_t S
=\partial_{xx} \Phi(S)
 + \partial_x S \left[\vv + h(r)\ww \right]
 + S \left[\partial_x\vv + h(r)\partial_x\ww \right]
 + S\ww h'(r)\partial_x r.
\end{equation}
\bigskip

Next we derive the evolution equation for $r$. We obtain 
$$
\partial_t r 
= \frac{1}{\rho} \left[\partial_x(\rho\partial_x f'(S)) + \partial_x(\rho\partial_x V) \right]
 - \frac{1}{\mu} \left[\partial_x(\mu\partial_x f'(S)) + \partial_x(\mu\partial_x W) \right].
$$

Notice that the terms $\partial_{xx} f(S)$ cancel out, thus:

$$
\partial_t r 
= \partial_x r \partial_x f'(S)
 +  \partial_x \log\rho\, \partial_x V - \partial_x \log\mu\, \partial_x W 
 + \partial_{xx} V  - \partial_{xx} W.
$$

We now handle the mixed term 

\begin{align*}
\partial_x \log \rho \partial_x V - \partial_x \mu \partial_x W &= \left(\partial_x \log \rho + \partial_x \log\mu \right)\ww + \left(\partial_x \log \rho - \partial_x \log\mu \right)\vv\\
& =2\partial_x\log S\ww+ \partial_x r\left(g'(r)\ww +\vv\right) .
\end{align*}

Putting everything together, we obtain
\begin{equation}\label{eq:dtr}
\partial_t r = \partial_x r \left(\partial_x f'(S)+g'(r)\ww + \vv\right) + 2\ww\partial_x\log S + \partial_{xx}V-\partial_{xx}W.
\end{equation}

\section{BV estimates and compactness}

In this section we prove $\mathrm{BV}$ estimates on $r$ and show that they imply $\mathrm{BV}$ estimates on $\rho$ and $\mu$. Combined with estimates on $\partial_t\rho$ and $\partial_t\mu$ and adapting the proof of the Aubin-Lions lemma, we are able to prove strong convergence of the approximating scheme towards a weak solution of~\eqref{eq:rho}--\eqref{eq:mu}. We start with $\mathrm{BV}$ estimates on $r$. 

\subsection{BV estimates on $r$}

\begin{proposition}
Under the assumptions of Theorem~\ref{thm:main} there exists $C$ such that: 

$$
\sup_{t\in[0,T]}\int_{\T}|\partial_xr(t,\cdot)|\le C. 
$$
\end{proposition}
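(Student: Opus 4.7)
I would follow the strategy sketched in the introduction and work with an auxiliary scalar $u$ whose $L^1$ norm I can control, and from which $\partial_x r$ is recovered up to an explicit function of $S$. Concretely, I set
\[
u := \partial_x r - 2\,\ww\,y(S),\qquad \ww=\tfrac12(\partial_x V-\partial_x W),
\]
where $y\colon(0,\infty)\to\R$ is chosen to satisfy the first order ODE
\[
s\,y'(s)-y(s)=\frac{1}{s\,f''(s)}.
\]
For the fast diffusion law of Assumption~\ref{ass:nonlinearity}, $f''(s)=\alpha s^{\alpha-2}$, this integrates explicitly to $y(s)=-s^{1-\alpha}/\alpha^2$ when $\alpha<1$ (and one takes $y\equiv-1$ when $\alpha=1$, recovering the variable of~\cite{2025arXiv250418484M}). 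The crucial feature is that $|y(S)|\lesssim S^{1-\alpha}$ belongs to $L^\infty((0,T);L^1(\T))$: indeed, by Jensen's inequality on $(\T,dx)$ (a probability space) and concavity of $s\mapsto s^{1-\alpha}$, we have $\int_\T S^{1-\alpha}\,dx\le(\int_\T S\,dx)^{1-\alpha}\le C$. Hence any uniform bound on $\int_\T|u|\,dx$ immediately yields the sought estimate on $\int_\T|\partial_x r|\,dx$.

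Differentiating \eqref{eq:dtr} in $x$, using \eqref{eq:dS_final} to substitute $\partial_t S$, and writing $\partial_x r=u+2\ww y(S)$ in the transport contribution, produces the conservation-form equation
\[
\partial_t u=\partial_x(au)+G,\qquad G:=\partial_x\bigl(2a\ww\,y(S)\bigr)+\partial_x b-2\ww\,y'(S)\,\partial_t S,
\]
with $a=\partial_x f'(S)+g'(r)\ww+\vv$ and $b=2\ww\,\partial_x\log S+\partial_{xx}(V-W)$. The ODE defining $y$ is precisely what is needed so that, when $G$ is expanded and all $\partial_{xx}\log S$ (equivalently $\partial_{xx}S/S$) contributions from the three pieces of $G$ are collected, their coefficients $2\ww$ (from $\partial_x b$), $2\ww\,y(S)f''(S)S$ (from $\partial_x[2\ww y(S)\partial_x f'(S)]$), and $-2\ww\,y'(S)\Phi'(S)S$ (from $-2\ww y'(S)\partial_{xx}\Phi(S)$) sum to zero. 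What survives in $G$ is a collection of terms of the form: a bounded coefficient times one of $(\partial_x\log S)^2$, $\partial_x y(S)$, $y(S)$, $S\,y'(S)$; the third derivative $\partial_{xxx}(V-W)$; and a term proportional to $S\,y'(S)h'(r)\partial_x r$ (plus a similar one from $g''(r)\partial_x r\,\ww$ inside $\partial_x a$), in which the substitution $\partial_x r=u+2\ww y(S)$ produces a contribution $M\,u$ with $|M|\lesssim S^{1-\alpha}$ together with an extra source proportional to $S^{2(1-\alpha)}$.

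Proposition~\ref{prop:spatial_S} then delivers every needed integrability: $(\partial_x\log S)^2$ and $\partial_x S^{1-\alpha}$ are in $L^1((0,T)\times\T)$, $|y(S)|,\,|Sy'(S)|\lesssim S^{1-\alpha}$ is in $L^1((0,T);L^\infty(\T))$, and $S^{2(1-\alpha)}\le 1+S^{2-\alpha}\in L^1((0,T)\times\T)$. Writing the full source as $M\,u+F$, we have in particular $\|M(t,\cdot)\|_{L^\infty(\T)}\lesssim\|S^{1-\alpha}(t,\cdot)\|_{L^\infty(\T)}\in L^1(0,T)$ and $\|F(t,\cdot)\|_{L^1(\T)}\in L^1(0,T)$. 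Multiplying $\partial_t u=\partial_x(au)+Mu+F$ by $\sgn(u)$ and integrating over $\T$ kills the conservative transport ($\int_\T\sgn(u)\,\partial_x(au)\,dx=0$ by periodicity) and gives
\[
\frac{d}{dt}\int_\T|u|\,dx\le \|M(t,\cdot)\|_{L^\infty(\T)}\int_\T|u|\,dx+\int_\T|F(t,\cdot)|\,dx.
\]
Gronwall's lemma, together with the initial estimate $\int_\T|u(0,\cdot)|\,dx\le\int_\T|\partial_x r_0|+C\le C$ (finite thanks to the assumption $\log(\rho_0/\mu_0)\in BV(\T)$), provides $\sup_{t\in[0,T]}\int_\T|u|\,dx\le C$. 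The pointwise bound $|\partial_x r|\le|u|+2\|\ww\|_\infty|y(S)|$ combined with the $L^\infty_t L^1_x$ control of $|y(S)|$ concludes the proof.

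The principal obstacle is the algebraic cancellation: the naive equation for $\partial_x r$ contains the source $2\ww\,\partial_{xx}\log S$, which is not integrable in view of the estimates of Proposition~\ref{prop:spatial_S}. The whole point of the definition of $u$ and of the ODE for $y$ is to trade this singular term against the companion term $2\ww y'(S)\partial_{xx}\Phi(S)$ coming from the evolution of $S$. Once the cancellation is secured, the remaining bookkeeping is routine, driven entirely by the estimates already proved for $S$.
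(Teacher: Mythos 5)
Your proposal is correct and follows essentially the same route as the paper: the same auxiliary variable $u=\partial_x r-2\ww\,y(S)$ with $y$ solving $sy'(s)-y(s)=1/(sf''(s))$ (equivalently the paper's condition $y\Phi'/s-y'\Phi'+1/s=0$, giving $y(s)=-s^{1-\alpha}/\alpha^2$), the same cancellation of the $\partial_{xx}S$ terms, the same $\sgn(u)$--Gronwall argument, and the same use of Proposition~\ref{prop:spatial_S} to control the remaining sources and to convert the bound on $u$ into one on $\partial_x r$ via $y(S)\in L^\infty((0,T);L^1(\T))$. Your explicit Jensen-inequality bound for $\int_\T S^{1-\alpha}$ and the remark on the initial datum $\log(\rho_0/\mu_0)\in BV(\T)$ are small but welcome clarifications of details the paper leaves implicit.
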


\begin{proof}
In order to derive a bound in $\mathrm{BV}$ for $r$ we differentiate the equation for $r$ with respect to $x$ and obtain

\begin{equation}\label{eq:w_full}
\partial_{t}\partial_{x}r = \partial_{x}\left(\partial_x r \left(\partial_x f'(S)+g'(r)\ww + \vv\right)\right)    + 2\partial_{xx}\log S \ww + 2\partial_x\ww\partial_{x}\log S + \partial_{xxx}V-\partial_{xxx}W.
\end{equation}

We now introduce a carefully chosen shift that cancels the highest‐order terms.  We fix a smooth function $\gamma=\gamma(x) $ and another function $y=y(S)$ depending only on $S$ to be choosen later.  Define
\begin{equation}\label{eq:def_u}
u(t,x) := \partial_{x}r(t,x)  - \gamma \,y \left(S(t,x) \right).
\end{equation}

We first compute 
$$
\partial_t u 
= \partial_t \partial_x r 
 - \gamma\,y' \left(S \right)\partial_t S.
$$

Also recall \eqref{eq:w_full} for $\partial_t \partial_x r$.  Subtracting $\gamma\,y'(S)\partial_t S$ from \eqref{eq:w_full} and using \eqref{eq:def_u}, we get

\begin{align*}
\partial_{t}u 
&= \partial_{x}\left(u\left(\partial_{x}f'(S)  +  g'(r)\ww  +  \vv\right)\right) 
   + \partial_{x}\left(\gamma y(S)\left(\partial_{x}f'(S)  +  g'(r)\ww  +  \vv\right)\right) \\
&\quad - \gamma y'(S)\partial_{t}S 
   +  2\frac{\partial_{xx}S}{S}\ww
   -  2\left(\partial_x \log S\right)^2\ww
   +  2\partial_{x}\log S\,\partial_x\ww
   +  \partial_{xxx}V  - \partial_{xxx}W,
\end{align*}

Then we use the expression for the time-derivative of $S$:
\begin{multline*}
\partial_t S = \Phi'(S)\partial_{xx}S +\Phi''(S))|\partial_x S|^2 + \partial_x S(\vv + h(r)\ww) \\+ S(\partial_x\vv + \partial_x\ww h(r)) + S\ww h'(r)u + S\ww h'(r)\gamma  y(S)
\end{multline*}

Thus

\begin{equation}\label{eq:dtu}
\begin{split}
\partial_{t}u 
&= \partial_{x}\left(u\left(\partial_{x}f'(S) + g'(r)\ww + \vv\right)\right) 
   -  \gamma y'(S)S\ww h'(r)u \\
&\quad + \gamma y(S)f''(S)\partial_{xx}S 
   - \gamma y'(S)\Phi'(S)\partial_{xx}S 
   + 2\frac{\partial_{xx}S}{S}\ww \\
&\quad + \partial_{x}\left(\gamma y(S)f''(S)\right)\partial_{x}S 
   + \partial_{x}\left(\gamma y(S)\left(g'(r)\ww+\vv\right)\right) \\
&\quad - \gamma y'(S)\left[\Phi''(S)|\partial_{x}S|^2 
   + \partial_{x}S\left(\vv + h(r)\ww\right) 
   +  S\left(\partial_x\vv + \partial_x\ww h(r)\right)+ \gamma y(S)S\ww h'(r)\right] \\
  &\quad -  2\left(\partial_x\log S\right)^2\ww 
   +  2\partial_{x}\log S\partial_x\ww 
   +  2\partial_{xx}\ww.
\end{split}
\end{equation}

We choose $\gamma =2\ww$ and recalling $f''(S)=\frac{\Phi'(S)}{S}$ we choose $y$ so that it satisfies the condition :
\begin{equation}
y(s)\frac{\Phi'(s)}{s}  - y'(s)\Phi'(s)  + \frac{1}{s}  = 0.
\end{equation}
to cancel the second line with the second derivatives of $S$ involved. 
We then take
$$
y(s)= -s\int_s^{\infty}\frac{1}{\sigma^2\Phi'(\sigma)}d\sigma,
$$
where the choice integrate on $(s,\infty)$ instead of $(0,s)$ is justified by the behavior of the integrand in the case $\Phi(s)=s^{\alpha}$. In this case, we obtain $y(s)=-\frac{s^{1-\alpha}}{\alpha^2}$. Note that in the case $\alpha=1$ this means $y(s)=-1$ and the quantity $u$ becomes exactly $\partial_x(r+V-W)$, which means that we come back to the very same computation as in \cite{2025arXiv250418484M}. Now we multiply Equation~\eqref{eq:dtu} by $\sgn(u)$ and integrate in space. The transport term involving $u$ disappears by periodic boundary conditions. As we chose $\gamma $ and $y(S)$  such that the second line disappears, we are only left with lower order terms. We show that there exists $C(t)\in L^{1}(0,T)$ such that we have
$$
\frac{d}{dt}\int_{\T}|u|\le C(t) \left(1 + \int_{\T}|u|\right).
$$

All the functions in $x$, that is $\gamma,\vv$ and $\ww$ are bounded together with their derivatives; also $h, h', g'$ are uniformly bounded.  Using the definition of $\Phi$ and $y$ we first look at all the terms including $|\partial_xS|^2$ and we see that they are all of the form $c\gamma |\partial_xS|^2$ since all functions $y'f''$, $yf'''$ and $y'\Phi''$ are proportional to $s^{-2}$. This allows to obtain:

\begin{equation}
\begin{split}
\frac{d}{dt}\int_{\T}|u| 
&\le C\int_{\T}|y'(S)S||u|\\
&+ C\int_{\T}\frac{||\partial_x S|^2}{S^2} \\
& + C\int_{\T}\left(\left|\frac{y(S)\Phi'(S)}{S}\right|+ |y'(S)| + \frac{1}{S}\right)|\partial_x S| +\\ &+ C\int_{\T}|y(S)|  + S|y'(S)| + |y'(S)y(S)S|.
\end{split}
\end{equation}

We control the right-hand in the following with Proposition~\ref{prop:spatial_S}:
\begin{enumerate}
\item $\int_{\T}|y'(S) S| |u|$:  $y'(S)S = C S^{1-\alpha}\in L^{1}((0,T); L^{\infty}(\T))$;
\item $\int_{\T}\frac{|\partial_x S|^2}{S^2}$: $\partial_x \log S\in L^{2}((0,T)\times \T)$; 
\item $\int_{\T}\left|\frac{y(S)\Phi'(S)}{S}\right||\partial_x S|$: $\frac{y(S)\Phi'(S)}{S}= C S^{-1}$ and $\partial_x \log S \in L^{2}((0,T)\times\T)$ 
\item $\int_{\T} |y'(S)||\partial_x S|$: $y'(S)\partial_x S = C \partial_x  S^{1-\alpha}\in L^{1}((0,T)\times \T)$; 
\item $\int_{\T}|y(S)|$: $y(S)=CS^{1-\alpha}\in L^{1}((0,T); L^{\infty}(\T))$ (but, using $1-\alpha<1$, we also have $S^{1-\alpha}\in L^{\infty}((0,T); L^{\infty}(\T))$; 
\item $\int_{\T}|y'(S)S|$: we notice that $y'(S)S=C y(S)$ so the computation is similar to the 5th one;
\item $\int_{\T}|y'(S)Sy(S)|$: $y'(S)Sy(S)= C y(S)^2= CS^{2-2\alpha}$ which is bounded in $L^{1}((0,T); L^1(\T))$ (since $S\in L^{2-\alpha}((0,T)\times \T)$).
\end{enumerate}

From 
$$
\frac{d}{dt}\int_{\T}|u|\le C(t) \left(1 + \int_{\T}|u|\right)
$$
and Gronwall's lemma we obtain that there exists $C$ such that: 

$$
\sup_{t\in[0,T]}\int_{\T}|u(t,\cdot)|\le C
$$

which concludes the proof of the first estimate. Moreover  we have also proved that $y(S)\in L^{\infty}((0,T); L^{1}(\T))$. Therefore
$$
\sup_{t\in[0,T]}\int_{\T}|\partial_xr(t,\cdot)|\le C. 
$$

\end{proof}

This spatial regularity and the spatial regularity on $S$ can be translated to a spatial regularity on both $\rho$ and $\mu$ that are functions of $r,S$.  

\subsection{BV estimates on $\rho$ and $\mu$ and compactness}

Some of the following bounds are not suitably integrable in time. However, we prove below that this does not prevent strong compactness of the approximating scheme. 

\begin{proposition}[Spatial BV estimates]
\label{prop:BV_estim_rho_mu}
Under the hypotheses of Theorem~\ref{thm:main}, there exists an exponent $0<\varepsilon\le1$ such that
$$
\partial_x\rho,\quad \partial_x\mu \;\in\;L^\varepsilon((0,T);L^1(\T)).
$$
\end{proposition}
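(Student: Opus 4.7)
The plan is to express $\partial_x\rho$ and $\partial_x\mu$ in terms of $\partial_xS$ and $\partial_xr$ via \eqref{eq:rho_mu_in_terms_of_S_r}. Differentiating $\rho=Se^r/(e^r+1)$ (and similarly for $\mu$) produces coefficients $e^r/(e^r+1)$ and $Se^r/(e^r+1)^2$ whose $r$-dependent factors are bounded by $1$ and $1/4$ respectively, so that
$$|\partial_x\rho|+|\partial_x\mu|\;\le\;2|\partial_xS|\;+\;\tfrac{1}{2}\,S\,|\partial_xr|.$$
The task therefore reduces to bounding the two right-hand pieces in $L^\varepsilon((0,T);L^1(\T))$ for some $\varepsilon>0$.

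For the first piece, I would write $\partial_xS=(2/\alpha)\,S^{1-\alpha/2}\,\partial_xS^{\alpha/2}$ and apply Cauchy--Schwarz in space followed by Cauchy--Schwarz in time, invoking $S\in L^{2-\alpha}((0,T)\times\T)$ and $\partial_xS^{\alpha/2}\in L^2((0,T)\times\T)$ from Proposition~\ref{prop:spatial_S}. This yields $\partial_xS\in L^1((0,T)\times\T)$, hence $t\mapsto\|\partial_xS(t,\cdot)\|_{L^1(\T)}\in L^1(0,T)$.

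For the second piece, I would couple the uniform-in-time $\mathrm{BV}$ bound $\|\partial_xr(t,\cdot)\|_{L^1(\T)}\le C$ just established with the trivial inequality
$$\int_\T S\,|\partial_xr|\,dx\;\le\;\|S(t,\cdot)\|_{L^\infty(\T)}\;\|\partial_xr(t,\cdot)\|_{L^1(\T)}\;\le\;C\,\|S(t,\cdot)\|_{L^\infty(\T)}.$$
The sixth estimate of Proposition~\ref{prop:spatial_S} rewrites as $\int_0^T\|S(t,\cdot)\|_{L^\infty(\T)}^\alpha\,dt<\infty$, so the upper bound has $\alpha$-th power integrable on $(0,T)$.

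Combining the two pieces, $t\mapsto\|\partial_x\rho(t,\cdot)\|_{L^1(\T)}$ and $t\mapsto\|\partial_x\mu(t,\cdot)\|_{L^1(\T)}$ both have $\alpha$-th power integrable in time (using that the first piece, being in $L^1(0,T)$, is also in $L^\alpha(0,T)$ since $\alpha\le 1$ on a finite interval), and taking $\varepsilon=\alpha$ concludes. The only real subtlety is that for $\alpha<1$ the time-integrability drops strictly below one because $\|S\|_{L^\infty(\T)}$ may blow up where $S$ concentrates; this loss of integrability is precisely what prevents a direct application of the Aubin--Lions lemma in the compactness step and forces the adapted version announced at the top of the section.
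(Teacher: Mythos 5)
Your proof is correct and follows essentially the same route as the paper: the same decomposition of $\partial_x\rho,\ \partial_x\mu$ into a $\partial_x S$ part (treated by writing $\partial_x S=CS^{1-\alpha/2}\partial_x S^{\alpha/2}$ and invoking Proposition~\ref{prop:spatial_S}) and an $S\,\partial_x r$ part (treated by the uniform BV bound on $r$ together with a time-integrable bound on $\|S(t,\cdot)\|_{L^\infty(\T)}$). The only, inessential, difference is the exponent: you use $S^{\alpha/2}\in L^2((0,T);L^\infty(\T))$ to take $\varepsilon=\alpha$, whereas the paper uses $S^{1-\alpha}\in L^1((0,T);L^\infty(\T))$ and $S^{\alpha-1/2}\in L^2((0,T);L^\infty(\T))$ to take $\varepsilon=\max\{1-\alpha,2\alpha-1\}$; both choices are valid since only some $\varepsilon\in(0,1]$ is required.
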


\begin{proof}
We only treat $\rho$, since the treatment of $\mu$ is identical.  Recall
$$
\rho
=\frac{Se^{r}}{1+e^{r}}.
$$
Differentiating in $x$ gives
$$
\partial_x\rho
=\frac{e^r}{1+e^r}\partial_xS
+
\frac{Se^r}{(1+e^r)^2}\partial_xr.
$$
Both factors $\frac{e^r}{1+e^r}$ and $\frac{e^r}{(1+e^r)^2}$ are bounded uniformly.

Concerning the first term on the right hand side: 
$$
|\partial_x S| =  C |\partial_x S^{\frac{\alpha}{2}} |S^{1-\frac{\alpha}{2}}
$$

Therefore with Proposition~\ref{prop:spatial_S} we deduce
$$
\partial_x S\in L^{1}((0,T)\times \T)
$$
as a product of two factors each in $L^{2}((0,T)\times \T)$.
With Proposition~\ref{prop:spatial_S} that yields $S\in L^{1-\alpha}((0,T); L^{\infty}(\T))$ and $\partial_xr \in L^{\infty}((0,T); L^{1}(\T))$ from the previous section we also deduce that there exists $0<\varepsilon\le 1$ such that 
$$
S|\partial_xr|\in L^{\varepsilon}((0,T); L^{1}(\T)).
$$
Indeed, from $\partial_x r\in L^\infty((0,T); L^{1}(\T))$ we just need $S\in  L^\varepsilon((0,T); L^{\infty}(\T))$, hence we can take $\varepsilon=\max\{1-\alpha,2\alpha-1\}$ (we observe that we have $\varepsilon\geq 1/3$ but this is not crucial).
We conclude $\partial_x \rho \in L^{\varepsilon}((0,T); L^{1}(\T))$ as claimed.
\end{proof}

We now prove compactness in time for $\rho$ and $\mu$:
\begin{proposition}
There exists $s>0$ such that
\begin{equation*}
    \partial_t\rho, \partial_t \mu \in L^{2}((0,T); H^{-s}(\T)). 
\end{equation*}
\end{proposition}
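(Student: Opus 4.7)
The plan is to read the equation for $\partial_t \rho$ (and similarly for $\mu$) in the distributional sense, pair it against a test function $\varphi \in H^s(\T)$ for some $s > 3/2$, and estimate each resulting term using the bounds already at our disposal. The Sobolev embedding $H^s(\T)\hookrightarrow W^{1,\infty}(\T)$ valid in one dimension for $s>3/2$ is the reason we do not need a very negative Sobolev space; choosing $s=2$ (say) will be perfectly sufficient.

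Concretely, for $\varphi\in H^s(\T)$, using \eqref{eq:rho} we write
\begin{equation*}
\langle \partial_t\rho(t,\cdot), \varphi\rangle
= -\int_{\T} \rho\,\partial_x f'(S)\,\partial_x\varphi\,dx
  -\int_{\T} \rho\,\partial_x V\,\partial_x\varphi\,dx.
\end{equation*}
For the drift term, mass conservation yields $\rho\in L^{\infty}((0,T);L^{1}(\T))$, and since $V\in C^{3}(\T)$ and $\|\partial_x\varphi\|_{L^{\infty}}\le C\|\varphi\|_{H^s}$, this term is bounded uniformly in time by $C\|\varphi\|_{H^s}$, hence its $L^{2}_t$ norm is bounded. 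For the flux term, the key input is the $L^{2}((0,T)\times\T)$ bound on $\sqrt{\rho}\,\partial_x f'(S)$ coming from the energy dissipation (already recalled in the introduction and encoded in point 2 of Definition~\ref{def}). Writing
\begin{equation*}
\Bigl|\int_{\T} \rho\,\partial_x f'(S)\,\partial_x\varphi\,dx\Bigr|
= \Bigl|\int_{\T} \sqrt{\rho}\,\partial_x f'(S)\,\sqrt{\rho}\,\partial_x\varphi\,dx\Bigr|
\le \bigl\|\sqrt{\rho}\,\partial_x f'(S)\bigr\|_{L^{2}_x} \bigl\|\sqrt{\rho}\bigr\|_{L^{2}_x}\,\|\partial_x\varphi\|_{L^{\infty}_x},
\end{equation*}
and using $\|\sqrt{\rho}\|_{L^{2}_x}^{2}=\|\rho\|_{L^{1}_x}\le C$ together with the Sobolev embedding, we obtain
\begin{equation*}
\bigl|\langle \partial_t\rho(t,\cdot),\varphi\rangle\bigr|
\le C\bigl(1 + \bigl\|\sqrt{\rho}\,\partial_x f'(S)\bigr\|_{L^{2}_x}\bigr)\,\|\varphi\|_{H^s}.
\end{equation*}

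Taking the supremum over $\|\varphi\|_{H^s}\le 1$ and squaring in time, the right-hand side lies in $L^{1}(0,T)$ thanks to the $L^{2}_{t,x}$ bound on $\sqrt{\rho}\,\partial_x f'(S)$, which gives $\partial_t\rho\in L^{2}((0,T);H^{-s}(\T))$ with $s=2$ (any $s>3/2$ works). The same argument applied to \eqref{eq:mu} gives the analogous bound for $\partial_t\mu$. I do not foresee a serious obstacle here: all ingredients (mass conservation, the energy dissipation bound on $\sqrt{\rho}\,\partial_x f'(S)$ and $\sqrt{\mu}\,\partial_x f'(S)$, smoothness of $V$ and $W$) are already established, so the main point is simply to choose the negative Sobolev exponent large enough to absorb the $L^{\infty}$ norm of $\partial_x\varphi$ via the 1D Sobolev embedding.
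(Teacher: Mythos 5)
Your argument is correct and essentially the same as the paper's: both read $\partial_t\rho$ from the equation, split the flux as $\sqrt{\rho}\cdot\sqrt{\rho}\,\partial_x f'(S)$, combine $\rho\in L^{\infty}((0,T);L^{1}(\T))$ with the $L^{2}$ bound on $\sqrt{\rho}\,\partial_x f'(S)$, and conclude by embedding derivatives of $L^{1}$ functions into $H^{-s}$ (you simply make the duality with $H^{s}\hookrightarrow W^{1,\infty}$, $s>3/2$, explicit). The only cosmetic difference is the source of the $L^{2}_{t,x}$ bound on $\sqrt{\rho}\,\partial_x f'(S)$: the paper obtains it from Proposition~\ref{prop:spatial_S} via $\sqrt{\rho}\,|\partial_x f'(S)|\le \sqrt{S}\,|\partial_x f'(S)|=C|\partial_x S^{\alpha-\frac12}|$, whereas you invoke the energy dissipation, which yields the same bound.
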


\begin{proof}
Once again we only prove the proposition for $\rho$. Proposition~\ref{prop:spatial_S} shows that $\sqrt{S}\partial_x f'(S)= C\partial_x S^{\alpha-\frac{1}{2}}\in L^{2}((0,T)\times \T)$ and therefore $\sqrt{\rho}\partial_x f'(S)\in L^{2}((0,T)\times \T)$. 
Since 
$$
\partial_t \rho = \p_x(\sqrt{\rho}\sqrt{\rho}\partial_xf'(S)) + \p_x(\rho\p_xV)
$$
with $\rho\in L^{\infty}((0,T); L^{1}(\T))$ we deduce that $\partial_t \rho \in L^{2}((0,T); Y)$ where $Y$ is the space of functions which are the derivatives of $L^1$ functions. This space can be continuously embed in an $H^{-s}$ space for some $s>0$.  
\end{proof}

In order to conclude the main theorem it remains to prove the following proposition. 

\begin{proposition}\label{prop:Aubin-Lions}
Let $\{\rho_n\}_n$ be a sequence of nonnegative functions. Let $0<\varepsilon<1 $. Assume there exists $C$ independent of $n$ such that
\begin{align*}
&\|\rho_n\|_{L^{\infty}((0,T); L^{1}(\T)) }\le C\\
& \|\partial_x\rho_n\|_{L^{\varepsilon}((0,T); L^{1}(\T))}\le C\\
&\|\partial_t \rho_n\|_{L^{2}((0,T); H^{-s}(\T))}\le C.
\end{align*}
Then up to a subsequence not relabed, there exists $\rho\in L^{\infty}((0,T); L^{1}(\T))$ with $\partial_t\rho\in L^{2}((0,T); H^{-s}(\T))$ such that $\rho_n\to \rho$ strongly in $L^{1}((0,T)\times \T)$ and almost everywhere in $(0,T)\times\T.$
\end{proposition}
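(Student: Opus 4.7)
I plan to deduce strong $L^{1}$ compactness from the Kolmogorov-Riesz-Fréchet criterion applied to $L^{1}((0,T)\times\T)$. Uniform $L^{1}$ boundedness of the family is immediate from the first hypothesis (and the fact that $\T$ is compact), so the two nontrivial requirements are equicontinuity under spatial translation and under temporal translation. The spatial part is straightforward: for each $t$, one has
$$
\|\rho_n(t,\cdot+h)-\rho_n(t,\cdot)\|_{L^{1}(\T)} \le \min\bigl(|h|\,\|\partial_x\rho_n(t)\|_{L^{1}(\T)},\, 2\|\rho_n(t)\|_{L^{1}(\T)}\bigr),
$$
and using the elementary inequality $\min(a,b)\le a^{\varepsilon}b^{1-\varepsilon}$ together with the first two hypotheses yields, after integration in $t$, a bound of the form $C|h|^{\varepsilon}$ uniformly in $n$.

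The time equicontinuity is the main difficulty, precisely because the spatial BV bound is only in $L^{\varepsilon}(dt)$ with $\varepsilon<1$, so the classical Ehrling-type argument for Aubin-Lions does not apply directly. The starting observation is that Cauchy-Schwarz in the third hypothesis gives $\|\rho_n(\cdot+\tau)-\rho_n(\cdot)\|_{L^{2}((0,T-\tau);H^{-s})}\le C\tau$. To convert this $H^{-s}$-smallness into an $L^{1}$-smallness in space I will use the one-dimensional Gagliardo-Nirenberg-type interpolation
$$
\|f\|_{L^{1}(\T)} \le C\,\|f\|_{BV(\T)}^{\theta}\,\|f\|_{H^{-s}(\T)}^{1-\theta},\qquad \theta=\frac{s}{s+1}<1,
$$
valid for $s>1/2$ (which may be assumed by enlarging $s$ freely). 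This inequality is obtained by a standard Littlewood-Paley split: low frequencies are controlled in $L^{2}$ through the $H^{-s}$ norm, while high frequencies are controlled in $L^{1}$ by Bernstein's inequality applied to $\partial_x\rho_n\in L^{1}$. To handle the weak time integrability of the BV bound I will decompose the time integral along the good set $E_{\lambda}=\{t: \|\partial_x\rho_n(t)\|_{L^{1}}\le\lambda\}$: on $E_{\lambda}\cap(E_{\lambda}-\tau)$, combining the above interpolation pointwise in $t$ with Hölder in time gives a contribution of order $C\lambda^{\theta}\tau^{1-\theta}$, while the complement has measure at most $C\lambda^{-\varepsilon}$ by Chebyshev and contributes at most $C\lambda^{-\varepsilon}$ via the trivial bound. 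Optimizing $\lambda=\tau^{-\alpha}$ with $0<\alpha<1/\theta-1$ makes both terms vanish as $\tau\to 0$, uniformly in $n$.

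Once both moduli of continuity are controlled, Kolmogorov-Riesz-Fréchet provides a subsequence converging strongly in $L^{1}((0,T)\times\T)$, and a further diagonal extraction yields almost everywhere convergence. The limit $\rho$ is nonnegative, and $\rho\in L^{\infty}((0,T);L^{1}(\T))$ follows from Fatou's lemma applied in the space variable along a pointwise-a.e.\ subsequence; finally $\partial_t\rho\in L^{2}((0,T);H^{-s}(\T))$ is inherited from the weak compactness of $\partial_t\rho_n$ in that reflexive space, the limit being identified with the distributional time derivative of $\rho$. The genuine obstacle of the argument is the time equicontinuity under the $L^{\varepsilon}$-in-time BV bound, and the key workaround is the Chebyshev truncation combined with the Gagliardo-Nirenberg interpolation between $BV(\T)$ and $H^{-s}(\T)$ described above.
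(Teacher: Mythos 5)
Your proof is correct, and it reaches the conclusion by a genuinely different route in the key step. For the spatial modulus you use $\min\bigl(|h|\,\|\partial_x\rho_n(t)\|_{L^1},\,2\|\rho_n(t)\|_{L^1}\bigr)\le a^{\varepsilon}b^{1-\varepsilon}$ and integrate in time, which is cleaner than the paper's argument (the paper splits $(0,T)$ into a good set $\{\|\partial_x\rho_n(t)\|_{L^1}\le M\}$ and its complement of measure $\lesssim M^{-\varepsilon}$, then optimizes $M$ in terms of $h$, getting the slightly worse exponent $\varepsilon/(1+\varepsilon)$ instead of your $\varepsilon$ — irrelevant for compactness). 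For the time modulus the two arguments diverge: the paper mollifies in space at scale $\delta$, bounds the mollified time increment by duality using $\partial_t\rho_n\in L^2((0,T);H^{-s})$ together with $\|\varphi_\delta\|_{H^{s}}\lesssim\delta^{-1/2-s}$, controls the mollification error by the already-established spatial modulus (Riesz--Fr\'echet--Kolmogorov corollary), and optimizes $\delta$ in terms of $k$; you instead first derive $\|\rho_n(\cdot+\tau)-\rho_n\|_{L^2((0,T-\tau);H^{-s})}\le C\tau$ and convert $H^{-s}$-smallness into $L^1$-smallness through the Littlewood--Paley interpolation $\|f\|_{L^1(\T)}\lesssim\|f\|_{BV}^{\theta}\|f\|_{H^{-s}}^{1-\theta}$ with $\theta=s/(s+1)$, applied pointwise in time on the Chebyshev good set $E_\lambda$ (whose complement has measure $\lesssim\lambda^{-\varepsilon}$ by the $L^{\varepsilon}$-in-time BV bound), then optimize $\lambda=\tau^{-\alpha}$. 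The two mechanisms are morally dual — mollification at scale $\delta$ plays the role of a frequency cutoff at $1/\delta$ — but yours packages the low/high-frequency trade-off into a single interpolation inequality and transfers the good/bad-set truncation from the space step to the time step; the paper's version avoids any Fourier-analytic input and reuses the space modulus directly, which makes it marginally more elementary. Your handling of the limit ($L^{\infty}L^1$ via Fatou, $\partial_t\rho\in L^2 H^{-s}$ via weak compactness in a Hilbert space) matches the paper's appeal to Banach--Alaoglu, and the final extraction of an a.e.\ convergent subsequence from $L^1$ convergence is standard.
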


\begin{proof}
The existence of $\rho$ and its regularity follows by weak star compactness with the Banach-Alaoglu theorem. Concerning strong compactness, we first prove equicontinuity in space. We let $M>0$, $f_n(t)=\|\partial_x\rho_n(t)\|_{L^{1}(\T)}$ and the sets 
$$
A_n = \left\{t\in(0,T),\; f_n(t)\le M\right\}, \quad B_n=(0,T)\setminus A_n.
$$
Thanks to our assumptions, there exists a constant $C$ independent of $n$ such that 
$$
|B_n|\le \frac{C}{M^{\varepsilon}}.
$$
Let $h>0$. We compute
\begin{align*}
\int_{0}^{T}\int_{\T}|\rho_n(t,x+h)-\rho_n(t,x)|&= \int_{A_n}\int_{\T}|\rho_n(t,x+h)-\rho_n(t,x)| + \int_{B_n}\int_{\T}|\rho_n(t,x+h)-\rho_n(t,x)|\\
&\le h \int_{A_n} \|\partial_x\rho_n\|_{L^{1}(\T)} + 2|B_n|\|\rho_n\|_{L^{\infty}((0,T);L^{1}(\T))}\\
&\le CMTh + \frac{C}{M^{\varepsilon}}.
\end{align*}
Choosing $M^{1+\varepsilon} = \frac{1}{h}$ we deduce that for all $n$ we have
\begin{align*}
\int_{0}^{T}\int_{\T}|\rho_n(t,x+h)-\rho_n(t,x)|\le Ch^{\frac{\varepsilon}{1+\varepsilon}}.
\end{align*}

Now we prove equicontinuity in time, and we consider a sequence of  mollifiers in space $\varphi_{\delta}(x)=\frac{1}{\delta}\varphi\left(\frac{x}{\delta}\right)$ for a smooth  nonnegative $\varphi$. Let $k>0$ and we compute
\begin{align*}
&\int_{0}^{T-k}\int_{\T}|\rho_n(t+k,x)-\rho_n(t,x)|\le \int_{0}^{T-k}\int_{\T}|\rho_n(t+k,\cdot)\ast\varphi_{\delta} -\rho_n(t,\cdot)\ast\varphi_\delta| \\ &+ \int_{0}^{T-k}\int_{\T}|\rho_n(t+k,\cdot)\ast\varphi_\delta-\rho_n(t+k,x)|+ \int_{0}^{T-k}\int_{\T}|\rho_n(t,\cdot)\ast\varphi_\delta-\rho_n(t,x)|
\end{align*}
By the previous equicontinuity in space, we can prove (it is a corollary of the Riesz-Frechet-Kolmogorov theorem) that the last two integrals are bounded by some monotone $\omega_1(\delta)$ independent of $n$ with $\omega_1(\delta)\to 0$ as $\delta\to 0$.    

Concerning the first term on the right-hand side we use the uniform  bound on $\partial_t\rho_n$ which yields 

$$
\int_{0}^{T-k}\int_{\T}|\rho_n(t+k,\cdot)\ast\varphi_{\delta} -\rho_n(t,\cdot)\ast\varphi_\delta|\le Ck\sqrt{T}\|\varphi_\delta\|_{H^{s}(\T)}\le \frac{Ck\sqrt{T}}{\delta^{\frac{1}{2}+s}}.
$$
Choosing $\delta^{\frac{1}{2}+s}=\sqrt{k}$ we find that there exists a monotone $\omega_{2}(k)$ such that $\omega_{2}(k)\to 0$ as $k\to 0$ such that
$$
\int_{0}^{T-k}\int_{\T}|\rho_n(t+k,x)-\rho_n(t,x)|\le \omega_{2}(k).
$$

Combining both equicontinuity and applying the Riesz-Fréchet-Kolmogorov theorem then yields the result. 
\end{proof}

With this proposition we are now able to pass to the limit and prove the main theorem. 

\begin{proof}[Proof of Theorem~\ref{thm:main}]
Let $\{\rho_n\}_n$,$\{\mu_n\}_n$ be a sequence of smooth functions constructed in an approximated scheme satisfying~\eqref{eq:rho}--\eqref{eq:mu} (up to a viscosity term) and with initial condition $\rho_{0},\mu_0$ respectively (up to smoothing also the intial condition). The difficulty is to pass to the limit in the term $\rho_n\partial_x f'(S_n)$, and same for $\mu_n$ (after integration by parts against a test function). Let $\rho$ and $\mu$ be the limit of $\{\rho_n\}_n$ and $\{\mu_n\}_n$ respectively (easily obtained from the previous sections), and $S$ their sum. 
We write this term as 
$$
\rho_n\partial_x f'(S_n)= \frac{\rho_n}{\sqrt{S_n}}\sqrt{S_n}\partial_x f'(S_n) =C\frac{\rho_n}{\sqrt{S_n}}\partial_x S_n^{\alpha-\frac{1}{2}}.
$$
We first show that $\frac{\rho_n}{\sqrt{S_n}}$ converges strongly to $\frac{\rho}{\sqrt{S}}$ in $L^2((0,T)\times \T)$. From Proposition~\ref{prop:Aubin-Lions} we know that it converge a.e. to $\frac{\rho}{\sqrt{S}}$ by continuity. Then 
$$
\left(\frac{\rho_n}{\sqrt{S_n}}\right)^2\le \rho_n,
$$
where $\rho_n$ converges strongly in $L^{1}((0,T)\times\T)$ from Proposition~\ref{prop:Aubin-Lions} and therefore is uniformly integrable. Thus by Vitali's convergence theorem (or the generalized Lebesgue dominated convergence theorem), $\frac{\rho_n}{\sqrt{S_n}}$ converges strongly in $L^2((0,T)\times \T)$ to $\frac{\rho}{\sqrt{S}}$. From Proposition~\ref{prop:spatial_S} we know that $\partial_x S_n^{\alpha-\frac{1}{2}}$ converges weakly in $L^{2}((0,T)\times \T)$. To identify it's limit it is enough to do it in the distributional sense, that is to identify the limit of $S_n^{\alpha-\frac{1}{2}}$. By convergence a.e. of $S_n$ and bounds on $S^{\alpha-\frac{1}{2}}$ in $L^{2}((0,T)\times \T)$ (since its gradient is bounded in this space) we deduce that $S_n^{\alpha-\frac{1}{2}}$ converges to $S^{\alpha-\frac{1}{2}}$ strongly in $L^{1}((0,T)\times \T)$ by Vitali's convergence theorem. This concludes the proof of the theorem. 
\end{proof}

\paragraph{Acknowledgements}
This work was supported by the European Union via the ERC
AdG 101054420 EYAWKAJKOS project.
The authors would like to thank Alpár Mészáros and Guy Parker for very useful discussions on the topic. In a previous draft of this paper we included an extra assumption $\alpha>2/3$ which we were able to remove thanks to these discussions. 
\bibliographystyle{siam}
\bibliography{biblio}

\end{document}